\documentclass[11pt]{amsart}

\usepackage{lmodern}
\usepackage{microtype}
\usepackage[english]{babel}
\usepackage{mathtools}
\usepackage{hyperref}
\usepackage[hyperpageref]{backref}
\usepackage[msc-links]{amsrefs}

\usepackage{pgfplots}
\pgfplotsset{compat=1.18}

\theoremstyle{plain} 
\newtheorem*{theorem}{Theorem}
\newtheorem*{lemma}{Lemma}
\newtheorem*{corollary}{Corollary}

\begin{document} 
\title{A bootstrap proof of Bohr's theorem}

\begin{abstract}
	Let $f$ be an analytic function in the unit disc whose modulus is bounded by $1$. We present a new proof of Bohr's theorem, which asserts that its majorant function $Mf(r)$ is bounded by $1$ for $0\leq r \leq 1/3$ and that $1/3$ is optimal. The argument also yields a sharp upper bound for $Mf(r)$ in terms of $\lvert f(0) \rvert$, complementing a result due to Bombieri.
\end{abstract}

\date{\today}

\subjclass{Primary 30H05. Secondary 30B10}

\thanks{Research supported by Grant 354537 of the Research Council of Norway.}

\author{Ole Fredrik Brevig} 
\address{Department of Mathematical Sciences, Norwegian University of Science and Technology (NTNU), 7491 Trondheim, Norway} 
\email{ole.brevig@ntnu.no}

\maketitle

This note contains a new proof of a well-known result due to Bohr~\cite{Bohr1914}. Let $f(z)=\sum_{k\geq0} a_k z^k$ be an analytic function in the unit disc $\mathbb{D}$ and consider its majorant function
\[Mf(z) = \sum_{k=0}^\infty \lvert a_k \rvert z^k,\]
so that $\lvert f(z) \rvert \leq Mf(\lvert z\rvert)$ for every $z$ in $\mathbb{D}$. The Schur class $S$ is the set of analytic functions in $\mathbb{D}$ that satisfy $\lvert f(z)\rvert \leq 1$ for every $z$ in $\mathbb{D}$. Inspired by a problem concerning Dirichlet series, Bohr was interested in the quantity
\[B(r) = \sup_{f \in S} Mf(r)\]
for $ 0 \leq r <1$ and, in particular, whether there are $r>0$ such that $B(r)=1$.

The easiest way to get an upper bound for $B$ is to use Cauchy's integral formula to bound $\lvert a_k \rvert \leq 1$, so that
\begin{equation} \label{eq:Brupper}
	B(r) \leq \frac{1}{1-r}.
\end{equation}
The most natural way to get a lower bound for $B(r)$ is to consider the class of disc automorphisms $\operatorname{Aut}(\mathbb{D})$, that is, the functions of the form
\begin{equation} \label{eq:varphi}
	\varphi(z) = \lambda \frac{w-z}{1-\overline{w}z},
\end{equation}
where $\lambda$ is a unimodular constant and $w$ is a point in $\mathbb{D}$. Let $A(r)$ denote the corresponding supremum over $f$ in $\operatorname{Aut}(\mathbb{D})$, so that $B(r) \geq A(r)$. 

\begin{lemma}
	If $0 \leq r \leq 1/3$, then $A(r)=1$. If $1/3 \leq r < 1$, then 
	\[A(r) = \frac{3-\sqrt{8(1-r^2)}}{r}.\]
	In particular, $1 < A(r) < 3r$ for $1/3 < r < 1$.
\end{lemma}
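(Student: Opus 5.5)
Since $\lambda$ is unimodular and the majorant only involves moduli of coefficients, we may assume $\lambda=1$. We may also replace $w$ by $\lvert w\rvert$ (a rotation $z\mapsto e^{i\theta}z$ does not change $Mf$), so $w=a\in[0,1)$. The first step is to expand
\[\varphi(z)=\frac{a-z}{1-az} = a-(1-a^2)\sum_{k\geq1} a^{k-1}z^k.\]
This gives the closed form
\[M\varphi(r) = a + (1-a^2)\frac{r}{1-ar}.\]
Thus $A(r)=\sup_{0\leq a<1} g(a)$ with $g(a)=a+\frac{r(1-a^2)}{1-ar}$. The problem reduces to a one-variable calculus optimisation on $[0,1)$, together with the limiting value $g(a)\to 1$ as $a\to 1^-$. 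Since the $a\to1$ limit is $1$, we always have $A(r)\geq 1$.

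The key computation is $g(a)-1$. Writing $1-a^2=(1-a)(1+a)$ gives
\[g(a)-1 = (1-a)\Bigl(\frac{r(1+a)}{1-ar}-1\Bigr) = (1-a)\frac{r(1+a)-1+ar}{1-ar}=(1-a)\frac{2ar+r-1}{1-ar}.\]
For $r\leq 1/3$ we have $2ar+r-1 < 3r-1\leq 0$ for every $a<1$. Hence $g(a)<1$ on $[0,1)$, the supremum is the limit value $1$, and $A(r)=1$.

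For $r>1/3$ I would maximise $h(a)=g(a)$ by setting $g'(a)=0$. Substituting $t=1-ar\in(1-r,1]$, so that $a=(1-t)/r$, turns $g$ into an expression of the form $\alpha+\beta t+\gamma/t$. Explicitly,
\[g = \frac{1-t}{r} + \frac{r^2-(1-t)^2}{rt} = \frac{1}{r}\Bigl(3-2t + \frac{r^2-1}{t}\Bigr)\cdot\text{(after simplification)}.\]
This is maximised at $t=\sqrt{(1-r^2)/2}$, with maximum value $\bigl(3-2\sqrt{2(1-r^2)}\bigr)/r=\bigl(3-\sqrt{8(1-r^2)}\bigr)/r$. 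Here I must check that the critical $t$ lies in $(1-r,1]$, i.e. that the critical $a$ lies in $[0,1)$. The bound $t\leq 1$ is clear. The bound $t>1-r$ amounts to $(1-r^2)/2>(1-r)^2$, i.e. $1+r>2(1-r)$, i.e. $r>1/3$, which is exactly the hypothesis. The maximum over the endpoint $a\to1$ is $1$, which is smaller since $g(a)>1$ for $a$ near $1$ when $r>1/3$, by the factorisation above. At $r=1/3$ both formulas give $1$, so the two cases agree.

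Finally, for the inequalities $1<A(r)<3r$ when $1/3<r<1$:
\begin{itemize}
\item $A(r)>1$ follows from the sign of $g(a)-1$ for $a$ close to $1$.
\item $A(r)<3r$ is equivalent to $3-3r^2<\sqrt{8(1-r^2)}$. Squaring, this becomes $9(1-r^2)<8$, i.e. $r^2>1/9$, which holds.
\end{itemize}
The main obstacle is only bookkeeping in the substitution step. I would double-check the simplification of $g$ in terms of $t$ carefully (a clean alternative is to verify directly that $g'(a)=0$ reduces to $(1-ar)^2=(1-r^2)/2$).
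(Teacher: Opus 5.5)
Your proposal is correct and follows the paper's route: the same closed form for $M\varphi(r)$ followed by a one-variable optimisation. You carry that optimisation out explicitly (factorising $g(a)-1$, substituting $t=1-ar$, and maximising $3-2t-(1-r^2)/t$, e.g.\ by AM--GM) where the paper just calls it a calculus exercise; your check of $A(r)<3r$ reduces to $9r^2>1$ exactly as in the paper, and for $A(r)>1$ you use the sign of $g(a)-1$ near $a=1$ instead of the paper's $(3r-1)^2>0$. One trivial blemish: at $r=0$ the strict inequality $2ar+r-1<3r-1$ is an equality, but the sign you actually need, $2ar+r-1<0$, still holds.
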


This formula is implicit in the work of Bombieri~\cite{Bombieri1962}, but we repeat some of the details of its proof. We shall only need to know that $A\geq1$ and that $1 < A(r) < 3r$ for $1/3 < r < 1$.

\begin{proof}[Proof of the lemma]
	If $\varphi$ is as in \eqref{eq:varphi}, then 
	\begin{equation} \label{eq:Mvarphi}
		M\varphi(r) = \lvert w \rvert + (1-\lvert w \rvert^2) \sum_{k=1}^\infty \lvert w \rvert^{k-1} r^k = \lvert w \rvert + \frac{(1-\lvert w \rvert^2)r}{1-\lvert w \rvert r}.
	\end{equation}
	The computation of $A(r)$ is now a straightforward calculus exercise. The final two assertions reduce, respectively, to $0<9(r-1/3)^2$ and $0<9r^2-1$, both of which hold for $1/3<r<1$. See Figure~\ref{fig:Ar}.
\end{proof}

\begin{figure}
	\centering
	\begin{tikzpicture}
	\begin{axis}[axis lines=left, xmin=0, xmax=1.08, ymin=0, ymax=3.25, samples=200,
	             xtick={1/3,1}, xticklabels={$\frac13$,$1$}, ytick={1,3}]
		\addplot[thick, domain=0:1/3] {1};
		\addplot[thick, domain=1/3:1] {(3-sqrt(8*(1-x^2)))/x};
		\addplot[dashed, domain=1/3:1] {3*x};
	\end{axis}
	\end{tikzpicture}
	\caption{$A(r)$ for $0 \leq r < 1$ and $3r$ for $1/3 \leq r < 1$.}
	\label{fig:Ar}
\end{figure}

It is plain that $M(fg)(r) \leq Mf(r)\,Mg(r)$ by the triangle inequality. When combined with the Taylor expansion and the triangle inequality again, we find that $M(\varphi \circ h)(r) \leq M\varphi(Mh(r))$ provided $Mh(r)<1$.

\begin{theorem}
	$B(r) = 1$ if and only if $0 \leq r \leq 1/3$.
\end{theorem}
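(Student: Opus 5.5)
The plan is to bootstrap the crude bound \eqref{eq:Brupper} through the composition inequality $M(\varphi\circ h)(r)\le M\varphi(Mh(r))$ stated just above, using the Lemma to close the loop. The direction $r>1/3$ is immediate: since $B(r)\geq A(r)$ and the Lemma gives $A(r)>1$ for $1/3<r<1$, we get $B(r)>1$ there. So the work is to show $B(r)\le 1$ for $0\le r\le 1/3$; the reverse inequality $B(r)\geq 1$ follows from $f\equiv1$.

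\emph{Factorization.} Fix $f\in S$. If $\lvert f(0)\rvert=1$, then $f$ is a unimodular constant by the maximum principle and $Mf\equiv 1$. Otherwise put $a=f(0)$ and $\varphi(z)=(a-z)/(1-\overline{a}z)$. This is a disc automorphism of the form \eqref{eq:varphi} with $w=a$, $\lambda=1$, and it is an involution. Then $\varphi\circ f$ lies in $S$ and vanishes at $0$. By Schwarz's lemma, $\varphi\circ f(z)=z g(z)$ with $g\in S$, so $f=\varphi\circ h$ with $h(z)=zg(z)$. We have $Mh(r)=rMg(r)\le rB(r)$. From \eqref{eq:Brupper}, for $r\le 1/3$ this is at most $r/(1-r)\le 1/2<1$, so the composition inequality applies. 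By \eqref{eq:Mvarphi}, $M\varphi(s)$ is increasing in $s\in[0,1)$, hence
\[Mf(r)\le M\varphi\bigl(rB(r)\bigr)\le A\bigl(rB(r)\bigr).\]
Taking the supremum over $f$ gives $B(r)\le A(rB(r))$ for $0\le r\le 1/3$.

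\emph{Bootstrap.} Clearly $B$ is nondecreasing in $r$, since each $Mf$ is, so it suffices to show $b:=B(1/3)\le 1$. We know $1\le b\le 3/2$ from \eqref{eq:Brupper}, and $b\le A(b/3)$ with $b/3\le 1/2$. Suppose $b>1$. Then $b/3>1/3$, and the Lemma gives $A(b/3)<3\cdot(b/3)=b$, which contradicts $b\le A(b/3)$. Hence $b=1$, and so $B(r)=1$ on $[0,1/3]$.

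The only delicate point is ensuring that $Mh(r)<1$, so that the composition inequality is legitimate. The a priori bound \eqref{eq:Brupper} handles this on $[0,1/3]$. The precise inequality $A(s)<3s$ from the Lemma is exactly what makes the fixed-point argument close.
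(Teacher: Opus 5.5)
Your proof is correct and is essentially the paper's argument: the Schwarz-lemma factorization $f=\varphi(zg)$ gives the bootstrap inequality $B(r)\le A(rB(r))$, and then $A(s)<3s$ at $s=B(1/3)/3$ rules out $B(1/3)>1$. One small omission: before taking the supremum, note that for unimodular constants $Mf\equiv 1\le A(rB(r))$, which holds because the Lemma gives $A\geq 1$.
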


\begin{proof}
	Let $f$ be a nonconstant function in $S$. Then $f(0)$ is in $\mathbb{D}$ by the maximum principle, so Schur's algorithm and the Schwarz lemma give
	\[f(z) = \frac{f(0)-zg(z)}{1-\overline{f(0)}zg(z)} = \varphi\left(zg(z)\right)\]
	for some $g$ in $S$, where $\varphi$ is as in \eqref{eq:varphi} with $\lambda=1$ and $w=f(0)$. Using the definitions of $B$ and $A$, we infer from this that
	\[Mf(r) \leq M\varphi\left(rMg(r)\right) \leq A(rB(r)),\]
	provided $rB(r)<1$. This bound also trivially holds for constant $f$ in $S$, since $A\geq1$. Taking the supremum, we obtain the bootstrap inequality
	\begin{equation} \label{eq:bootstrap}
		B(r) \leq A(rB(r))
	\end{equation}
	still under the proviso $rB(r)<1$. Let $b=B(1/3)$ and note that $b \leq 3/2$ by \eqref{eq:Brupper}. If $b>1$, then $1/3 < b/3 \leq 1/2$, so \eqref{eq:bootstrap} and the lemma give
	\[b \leq A(b/3) < b,\]
	which is absurd. Hence $b=B(1/3) \leq 1$, and since $B$ is increasing and $B \geq A \geq 1$, we get $B(r)=1$ for $0 \leq r \leq 1/3$.
	
	The bound $B(r) \geq A(r)$ and the lower bound in the final assertion of the lemma complete the proof.
\end{proof}

The bootstrap argument can be run once more. Suppose that $f$ is in $S$ and that $\lvert f(0) \rvert < 1$. Fix $0 \leq r \leq 1/3$ and repeat the first part of the proof with $B(r)=1$ as input, to obtain
\begin{equation} \label{eq:si}
	Mf(r) \leq \lvert f(0) \rvert + \frac{(1-\lvert f(0) \rvert^2)r}{1-\lvert f(0) \rvert r}.
\end{equation}
After recognizing the right-hand side of \eqref{eq:si} as $M\varphi(r)$ from \eqref{eq:Mvarphi}, we may think of \eqref{eq:si} as an $\ell^1$ version of Littlewood's subordination principle, since $f$ is subordinate to $\varphi$. This is in fact a special case of a result due to Bhowmik and Das~\cite{BD2018}*{Lemma~1}, which asserts that $Mf(r) \leq M\varphi(r)$ for $0 \leq r \leq 1/3$ without the requirement that $\varphi$ be a disc automorphism. Their argument relies on Bohr's theorem, while the argument presented above yields Bohr's theorem and \eqref{eq:si} together.

The corresponding $\ell^2$ version is Littlewood's subordination principle. If $0 \leq r < 1$ and if $f$ is subordinate to $\varphi$, then
\begin{equation} \label{eq:LSP}
	\int_0^{2\pi} \lvert f(re^{i\theta}) \rvert^2 \,\frac{d\theta}{2\pi} \leq \int_0^{2\pi} \lvert \varphi(re^{i\theta}) \rvert^2 \,\frac{d\theta}{2\pi}.
\end{equation}
If $\varphi$ is a disc automorphism, Bombieri \cite{Bombieri1962}*{Teorema~B} used \eqref{eq:LSP} to establish that \eqref{eq:si} holds also for $0 \leq r \leq \lvert f(0) \rvert$.\footnote{A different proof of Bombieri's result can be extracted from the proof of \cite{KP2017}*{Theorem~1}.} In combination, we have the following.

\begin{corollary}
	Let $f$ be in $S$ with $\lvert f(0) \rvert <1$. If $0\leq r\leq \max(\lvert f(0) \rvert,1/3)$, then
	\[Mf(r)\leq M\varphi(r),\]
	where $\varphi(z) = \frac{w-z}{1-\overline{w}z}$ for $w=f(0)$.
\end{corollary}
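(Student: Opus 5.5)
The corollary follows by splitting the range of $r$ into two cases, $0\le r\le 1/3$ and $0\le r\le \lvert f(0)\rvert$, and treating them separately. If $r\le 1/3$, I rerun the bootstrap step from the proof of the theorem with $B(r)=1$ as input. Write $w=f(0)$. For nonconstant $f$, Schur's algorithm gives $f=\varphi(zg)$ with $g\in S$ and $\varphi(z)=\frac{w-z}{1-\overline{w}z}$. Since $r\le 1/3$, the theorem gives $Mg(r)\le 1$, so $M(zg)(r)=rMg(r)\le r<1$. The composition inequality $M(\varphi\circ h)(r)\le M\varphi(Mh(r))$ then applies and yields
\[Mf(r)\le M\varphi\bigl(rMg(r)\bigr)\le M\varphi(r).\]
The last step holds because $M\varphi$ is increasing on $[0,1)$, having nonnegative coefficients. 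By \eqref{eq:Mvarphi}, this is exactly \eqref{eq:si}. If $f$ is constant, then $f\equiv w$ and $Mf(r)=\lvert w\rvert\le M\varphi(r)$, so this case is trivial.

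If $r\le \lvert f(0)\rvert$, I invoke Bombieri's Teorema~B as quoted before the corollary. The paper states that it gives \eqref{eq:si} in this range, and via \eqref{eq:Mvarphi} that is $Mf(r)\le M\varphi(r)$. I would only check that Bombieri's automorphism matches ours up to the unimodular factor $\lambda$. It does, because $M\varphi$ depends only on $\lvert w\rvert$ and not on $\lambda$. Since $r\le\max(\lvert f(0)\rvert,1/3)$ means $r$ lies in one of the two ranges, the two cases together prove the corollary.

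There is no real obstacle, because the hard step is the cited result of Bombieri. If I had to reprove it, I would start from \eqref{eq:LSP}. It gives $\sum_k \lvert a_k\rvert^2 r^{2k}\le \lvert w\rvert^2+(1-\lvert w\rvert^2)\,\frac{r^2}{1-\lvert w\rvert^2 r^2}$, to be combined with Cauchy--Schwarz on the tail $\sum_{k\ge1}\lvert a_k\rvert r^k$, weighted by $\lvert w\rvert^{k-1}$. I would aim to show that, after optimizing over $\lvert a_0\rvert=\lvert w\rvert$, this reproduces $M\varphi(r)$ precisely when $r\le\lvert w\rvert$. That is the step I expect to be delicate.
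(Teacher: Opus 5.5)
Your proof is correct and follows the paper's route exactly. For $r\le 1/3$ you rerun the Schur-algorithm step with $B(r)=1$ as input to get \eqref{eq:si}, and for $r\le\lvert f(0)\rvert$ you cite Bombieri's Teorema~B, just as the paper does.

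One correction to your optional reproof sketch. First, \eqref{eq:LSP} gives the tail bound $\sum_{k\ge1}\lvert a_k\rvert^2\rho^{2k}\le(1-\lvert w\rvert^2)^2\rho^2/(1-\lvert w\rvert^2\rho^2)$, with a square on $1-\lvert w\rvert^2$. Second, the sharp estimate does not come from using this at radius $r$, nor from optimizing over $\lvert a_0\rvert$, which is fixed. It comes from using it at $\rho=\sqrt{r/\lvert w\rvert}$ and applying Cauchy--Schwarz against $(r/\rho)^k=(r\lvert w\rvert)^{k/2}$. This gives exactly $(1-\lvert w\rvert^2)r/(1-\lvert w\rvert r)$, and it explains the restriction $r\le\lvert w\rvert$ as the condition $\rho\le1$.
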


For $f(0)=0$, the range $0\leq r\leq1/3$ in the corollary cannot be improved due to Bohr's theorem. Fix $0<\varrho<1$. It would be interesting to determine the largest $r$ such that $Mf(r)\leq M\varphi(r)$ for every $f$ in $S$ with $\lvert f(0)\rvert=\varrho$.
   
\subsection*{AI disclosure} The approach via Schur's algorithm and the majorant of a composition was identified by the author. The bootstrap argument and the corollary were developed in dialogue with Claude (Opus~5 and Fable~5.1). Claude was also used to check the argument, assist with the presentation, produce the figure, and search the literature (in particular, identifying \cite{BD2018} and \cite{KP2017}). The author wrote the final version of all arguments and takes full responsibility for the content.

\bibliography{bbohr}

\end{document}